\newtheorem{theorem}{Theorem}
\newtheorem{predl}{Theorem}
\newtheorem{lemma}{Lemma}
\newtheorem{rem}{Remark}
\begin{document}

\title{A Generalization of the Petrov Strong \\ Law of Large Numbers}


\author{Valery Korchevsky\thanks{Saint-Petersburg State University of Aerospace Instrumentation, Saint-Petersburg. \endgraf E-mail: \texttt{valery.korchevsky@gmail.com} }}

\date{}

\maketitle

\begin{abstract}
In 1969 V.V.~Petrov found a new sufficient condition for the applicability of the strong law of large numbers to sequences of independent random variables. He proved the following theorem: let $\{X_{n}\}_{n=1}^{\infty}$ be a sequence of independent random variables with finite variances and let $S_{n}=\sum_{k=1}^{n} X_{k}$. If $Var (S_{n})=O (n^{2}/\psi(n))$ for a positive non-decreasing function $\psi(x)$ such that $\sum 1/(n \psi(n)) < \infty$ (Petrov's condition) then the relation $(S_{n}-ES_{n})/n \to 0$ a.s. holds.

In 2008 V.V.~Petrov showed that under some additional assumptions Petrov's condition remains sufficient for the applicability of the strong law of large numbers to sequences of random variables without the independence condition.

In the present work, we generalize Petrov's results (for both dependent and independent random variables), using an arbitrary norming sequence in place of the classical normalization.
\end{abstract}

\bigskip

\noindent \textbf{Keywords:} strong law of large numbers, sequences of independent random variables, dependent random variables.
{\sloppy

}

\bigskip

\bigskip

\noindent {\Large{\textbf{1. Introduction}}}

\medskip

\noindent Following~\cite{Petr75}, we denote by $\Psi_c$ (or, respectively, $\Psi_d$) the set of  functions $\psi (x)$ such that $\psi (x)$ is positive and non-decreasing in the interval $x > x_0$ for some $x_0$ and the series $\sum \frac{1}{n \psi (n)}$ converges (respectively, diverges). The value $x_0$ is not assumed to be the same for different functions $\psi$. Examples of functions of the class $\Psi_{c}$ are the functions $x^{\delta}$ and $(\log x)^{1 + \delta}$ for any $\delta > 0$. The functions $\log x$ and $\log \log x$ belong to the class $\Psi_{d}$.
{\sloppy

}

The next result is classical Kolmogorov's theorem:

\begin{predl}\label{P101}
Let $\{X_{n}\}_{n=1}^{\infty}$ be a sequence of independent random variables with finite variances $Var (X_{n})$ and let $S_{n}=\sum_{k=1}^{n} X_{k}$. If
{\sloppy

}

\begin{equation}\label{e101}
    \sum_{n=1}^\infty \frac{Var (X_{n})}{n^{2}} < \infty
\end{equation}
then

\begin{equation}\label{e102}
    \frac{S_{n} - ES_{n}}{n} \to 0 \qquad \mbox{a.s.}
\end{equation}

\end{predl}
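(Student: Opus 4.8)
The plan is to reduce the strong law to the almost-sure convergence of a random series and then finish with Kronecker's lemma. First I would center the variables: replacing $X_n$ by $X_n - EX_n$ affects neither the hypothesis~\eqref{e101} (variances are unchanged) nor the conclusion~\eqref{e102} (since $S_n - ES_n = \sum_{k=1}^n (X_k - EX_k)$), so without loss of generality $EX_n = 0$ for every $n$, and the goal becomes $S_n/n \to 0$ a.s.

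The central step is to prove that the series $\sum_{n=1}^\infty X_n/n$ converges almost surely. Its summands $X_n/n$ are independent, have mean zero, and satisfy $\sum_{n=1}^\infty Var(X_n/n) = \sum_{n=1}^\infty Var(X_n)/n^2 < \infty$ by~\eqref{e101}. To pass from summability of the variances to a.s.\ convergence I would invoke Kolmogorov's maximal inequality: for independent zero-mean $Y_1,\dots,Y_m$ with finite variances, $P\bigl(\max_{1 \le k \le m} |\sum_{j=1}^k Y_j| \ge \varepsilon\bigr) \le \varepsilon^{-2}\sum_{j=1}^m Var(Y_j)$. Applying this to the tail blocks $Y_j = X_j/j$ and writing $T_n = \sum_{k=1}^n X_k/k$ gives $P\bigl(\sup_{M > N} |T_M - T_N| \ge \varepsilon\bigr) \le \varepsilon^{-2}\sum_{k > N} Var(X_k)/k^2$, and the right-hand side tends to $0$ as $N \to \infty$. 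Hence $(T_n)$ is almost surely Cauchy, so it converges a.s.

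Finally I would apply Kronecker's lemma: if $b_n \uparrow \infty$ and $\sum_n a_n/b_n$ converges, then $b_n^{-1}\sum_{k=1}^n a_k \to 0$. Taking $a_n = X_n$ and $b_n = n$, the almost-sure convergence of $\sum_n X_n/n$ yields $S_n/n \to 0$ a.s., which is precisely~\eqref{e102}.

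The main obstacle is the passage from summability of the variances to almost-sure convergence of $\sum_n X_n/n$. This is where independence is genuinely used and where Kolmogorov's maximal inequality carries the argument: orthogonality of the centered summands alone would give only $L^2$-convergence of the partial sums, whereas Kronecker's lemma requires the stronger almost-sure convergence that the maximal inequality delivers.
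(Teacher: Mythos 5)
The paper states this result as classical Kolmogorov's theorem and supplies no proof of its own (its Proofs section only treats Theorems~\ref{T201}, \ref{T204}, and~\ref{T205}), so there is nothing internal to compare against; your argument is correct and is the standard proof. Centering the variables, deducing the almost-sure convergence of $\sum_{n} X_n/n$ from~\eqref{e101} via Kolmogorov's maximal inequality applied to tail blocks, and concluding with Kronecker's lemma is exactly the classical route, and each step is justified as you describe.
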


Another sufficient condition for the applicability of the strong law of large numbers to sequences of independent random variables was founded by Petrov~\cite{Petr69} (see also~\cite{Petr75}).

\begin{predl}[Petrov]\label{P102}
Let $\{X_{n}\}_{n=1}^{\infty}$ be a sequence of independent random variables with finite variances. If
{\sloppy

}

\begin{equation}\label{e103}
Var (S_{n}) = O \biggl( \frac{n^{2}}{\psi(n)} \biggr) \qquad
      \mbox{for some function } \psi \in \Psi_{c},
\end{equation}
then relation~\eqref{e102} holds.

\end{predl}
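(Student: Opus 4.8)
The plan is to center the variables, reduce the a.s.\ convergence to a geometric subsequence, and then fill in the gaps with a maximal inequality. Without loss of generality assume $EX_n = 0$, so that $S_n = S_n - ES_n$ and it suffices to prove $S_n/n \to 0$ a.s. By independence I write $B_n = Var(S_n) = \sum_{k=1}^n Var(X_k)$, so that hypothesis~\eqref{e103} reads $B_n \le C\,n^2/\psi(n)$ for all large $n$ and some constant $C>0$. The first ingredient I would isolate is a summability lemma: from $\psi \in \Psi_c$ I deduce $\sum_k 1/\psi(2^k) < \infty$. Indeed, since $\psi$ is non-decreasing, for $2^k \le n < 2^{k+1}$ one has $1/(n\psi(n)) \ge 1/(2^{k+1}\psi(2^{k+1}))$, and summing the $2^k$ terms in each dyadic block gives $\sum_{n=2^k}^{2^{k+1}-1} 1/(n\psi(n)) \ge 1/(2\psi(2^{k+1}))$; summing over $k$ and invoking the convergence of $\sum 1/(n\psi(n))$ yields the claim.

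Along the dyadic subsequence $n_k = 2^k$ I would control $S_{2^k}/2^k$ directly by Chebyshev's inequality: for each $\epsilon > 0$, $P\bigl(|S_{2^k}| > \epsilon 2^k\bigr) \le B_{2^k}/(\epsilon^2 2^{2k}) \le C/(\epsilon^2 \psi(2^k))$, which is summable in $k$ by the lemma above. The first Borel--Cantelli lemma then gives $S_{2^k}/2^k \to 0$ a.s.

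It remains to bound the oscillation inside each block. Applying Kolmogorov's maximal inequality to the independent zero-mean variables $X_{2^k+1}, \dots, X_{2^{k+1}}$ gives $P\bigl(\max_{2^k < n \le 2^{k+1}}|S_n - S_{2^k}| > \epsilon 2^k\bigr) \le Var(S_{2^{k+1}} - S_{2^k})/(\epsilon^2 2^{2k})$, and since $Var(S_{2^{k+1}} - S_{2^k}) = B_{2^{k+1}} - B_{2^k} \le B_{2^{k+1}} \le 4C\,2^{2k}/\psi(2^{k+1})$, the right-hand side is $\le 4C/(\epsilon^2 \psi(2^{k+1}))$, again summable. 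Borel--Cantelli then forces the block maxima divided by $2^k$ to tend to $0$ a.s. Finally, for $2^k \le n < 2^{k+1}$ the elementary bound $|S_n|/n \le |S_{2^k}|/2^k + \max_{2^k < m \le 2^{k+1}}|S_m - S_{2^k}|/2^k$ shows that both pieces vanish, so $S_n/n \to 0$ a.s., which is~\eqref{e102}.

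The main obstacle is arranging for the two Borel--Cantelli series to converge simultaneously; the crux is that Petrov's hypothesis bounds $Var(S_n)$ itself, rather than the individual variances as in Kolmogorov's Theorem~\ref{P101}, and this is exactly calibrated so that both the subsequence probabilities and the block-maximum probabilities are dominated by the convergent series $\sum 1/\psi(2^k)$. The only genuinely technical points are the elementary transfer from $\sum 1/(n\psi(n))$ to $\sum 1/\psi(2^k)$ via the monotonicity of $\psi$, and the telescoping variance estimate $Var(S_{2^{k+1}} - S_{2^k}) = B_{2^{k+1}} - B_{2^k}$, both of which are routine once the dyadic blocking is in place.
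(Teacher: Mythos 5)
Your proof is correct and follows essentially the same route the paper uses for its generalization (Theorem~\ref{T204}, of which Theorem~\ref{P102} is the case $a_n = n$): dyadic blocking, Chebyshev along $n=2^k$, Kolmogorov's maximal inequality within blocks, and two applications of Borel--Cantelli, with the convergence of $\sum_k 1/\psi(2^k)$ (the paper's Lemma~\ref{Lem301}) supplying summability. Your direct derivation of that summability from $\sum 1/(n\psi(n))<\infty$ is a correct proof of the lemma the paper merely cites.
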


Relation~\eqref{e103} will be called Petrov's condition. It is known~\cite{Petr69} (see also~\cite{Egorov72}) that condition~\eqref{e103} in Theorem~\ref{P102} is optimal in the  following sense: it is impossible to replace condition~\eqref{e103} by the weaker assumption that corresponds to the replacement of $\psi \in \Psi_{c}$ by some function $\psi \in \Psi_{d}$.

If the random variables $X_1, X_2, \ldots$ are independent, then Petrov's condition is equivalent to the requirement that

\begin{equation}\label{e104}
      \sum_{k=1}^{n} Var (X_{k}) = O \biggl( \frac{n^{2}}{\psi (n)} \biggr) \qquad
      \mbox{for some function } \psi \in \Psi_{c}.
\end{equation}

It is proved (\cite[Theorem~1]{Ko10}) that~\eqref{e104} implies~\eqref{e101}. It follows that theorem~\ref{P102} is a consequence of Kolmogorov's theorem (Theorem~\ref{P101}). Nevertheless,  Petrov proved~\cite{Petr08, Petr09} that under some additional assumptions Petrov's condition is sufficient for the applicability of the strong law of large numbers to sequences of random variables without any independence assumptions.

\begin{predl}[Petrov~\cite{Petr08}]\label{P103}
Let $\{X_{n}\}_{n=1}^{\infty}$ be a sequence of non-negative random variables with finite variances. Suppose that conditions~\eqref{e103} is satisfied and
{\sloppy

}

\begin{equation}\label{e105}
     E(S_n -  S_m) \leqslant C (n-m) \qquad \mbox{for all sufficiently large } \; n - m,
\end{equation}
where $C$ is a constant. Then relation~\eqref{e102} holds.

\end{predl}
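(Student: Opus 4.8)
The plan is to prove Theorem~\ref{P103} by the classical subsequence-and-maximal-oscillation method. Because the $X_n$ are non-negative, the partial sums $S_n$ are non-decreasing, and this monotonicity is exactly what lets us control the whole sequence once we control a sparse subsequence. First I would choose a rapidly increasing subsequence of indices $n_j$ and establish the almost sure convergence $(S_{n_j}-ES_{n_j})/n_j \to 0$ along that subsequence; then I would fill in the gaps using monotonicity of $S_n$ together with the linear growth hypothesis~\eqref{e105} on the expectations $ES_n$.

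\medskip

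For the subsequence step, the natural choice is $n_j = \lfloor \alpha^j \rfloor$ for some $\alpha > 1$, or alternatively the dyadic-type blocks used in Petrov's own arguments. Along such a subsequence I would apply Chebyshev's inequality to $(S_{n_j}-ES_{n_j})/n_j$, obtaining
\begin{equation}\label{e106}
P\Bigl( \Bigl| \frac{S_{n_j}-ES_{n_j}}{n_j} \Bigr| > \varepsilon \Bigr) \leqslant \frac{Var(S_{n_j})}{\varepsilon^2 n_j^2} \leqslant \frac{C_1}{\varepsilon^2 \psi(n_j)},
\end{equation}
where the last bound uses Petrov's condition~\eqref{e103}. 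The key quantitative input is that $\psi \in \Psi_c$ grows fast enough that $\sum_j 1/\psi(n_j)$ converges when the $n_j$ grow geometrically: since $\psi$ is non-decreasing and $\sum 1/(n\psi(n)) < \infty$, a Cauchy-condensation style comparison shows $\sum_j 1/\psi(\alpha^j) < \infty$. Borel–Cantelli then gives $(S_{n_j}-ES_{n_j})/n_j \to 0$ almost surely.

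\medskip

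The gap-filling step is where the non-negativity and hypothesis~\eqref{e105} do the real work. For $n_j \leqslant n \leqslant n_{j+1}$, monotonicity of $S_n$ gives the two-sided sandwich $S_{n_j} \leqslant S_n \leqslant S_{n_{j+1}}$, so
\begin{equation}\label{e107}
\frac{S_{n_j}}{n_{j+1}} \leqslant \frac{S_n}{n} \leqslant \frac{S_{n_{j+1}}}{n_j}.
\end{equation}
Writing $S_n/n = (S_n - ES_n)/n + ES_n/n$ and comparing the deterministic terms $ES_n/n$ across the block, I would use~\eqref{e105} to show that $ES_{n_{j+1}} - ES_{n_j} \leqslant C(n_{j+1}-n_j)$, so that the oscillation of $ES_n/n$ over the block is controlled by $(n_{j+1}-n_j)/n_j = n_{j+1}/n_j - 1$, which can be made uniformly small by taking $\alpha$ close to $1$. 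Combining the subsequence convergence with this block estimate and letting $\alpha \downarrow 1$ through a countable sequence of values yields $(S_n - ES_n)/n \to 0$ almost surely.

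\medskip

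I expect the main obstacle to be the gap-filling step rather than the subsequence step: one must pass the block-oscillation bound through the decomposition $S_n = (S_n - ES_n) + ES_n$ carefully, because the stochastic part is only controlled at the endpoints $n_j, n_{j+1}$ while the deterministic part $ES_n$ is what genuinely needs the linear-growth hypothesis~\eqref{e105}. In particular, without~\eqref{e105} the means $ES_n$ could grow super-linearly and the ratio $ES_{n_{j+1}}/n_j$ would not stay close to $ES_{n_j}/n_j$, so the argument hinges on quantifying that $\limsup_n |S_n - ES_n|/n \leqslant (\alpha - 1)(C + o(1))$ and then letting $\alpha \downarrow 1$.
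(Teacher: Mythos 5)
Your proof is correct, and it is essentially Petrov's original argument for this theorem: geometric subsequence $n_j \approx \alpha^j$, Chebyshev plus the convergence of $\sum_j 1/\psi(\alpha^j)$ (the paper's Lemma~\ref{Lem301}), Borel--Cantelli, and monotone interpolation across the blocks. The paper, however, does not prove Theorem~\ref{P103} this way: it obtains it as the special case $p=2$, $a_n=n$ of Theorem~\ref{T201}, whose proof must do without the increment bound~\eqref{e105}, since the only mean condition retained there is $ES_n = O(a_n)$. The step where you genuinely use~\eqref{e105} is the gap-filling one, where $ES_{n_{j+1}} - ES_{n_j} \leqslant C(n_{j+1}-n_j)$ keeps the deterministic part $ES_n/n$ from oscillating inside a block. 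The paper replaces this with a combinatorial device: inside each geometric block it partitions the indices $k$ according to which interval $[s\varepsilon,(s+1)\varepsilon)$, $s=0,\dots,L$, the ratio $ES_k/a_k$ falls into, and runs the Chebyshev--Borel--Cantelli argument along the finitely many endpoint subsequences $k^{\pm}_{s}(l)$ so obtained; the monotone sandwich is then taken between two indices whose normalized means automatically differ by less than $\varepsilon$. Your route is shorter and more transparent but needs the stronger hypothesis~\eqref{e105}; the paper's route buys the weaker condition~\eqref{e201} and arbitrary norming sequences. One cosmetic point: your final bound should read $\limsup_n (S_n-ES_n)/n \leqslant (\alpha-1)(A+C)+o(1)$ with $A=\sup_n ES_n/n$ (finite by~\eqref{e105}), because the term $\bigl(ES_{n_j}/n_j\bigr)\bigl(1-n_j/n_{j+1}\bigr)$ also contributes to the block oscillation; this does not affect the conclusion as $\alpha \downarrow 1$.
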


It is proved in~\cite{KoPetr10} the next generalization of Theorem~\ref{P103}:

\begin{predl}[Petrov and Korchevsky]\label{P104}

Let $\{X_{n}\}_{n=1}^{\infty}$ be a sequence of non-negative random variables with finite absolute moments of some order $p \geqslant 1$. Suppose that condition~\eqref{e105} is satisfied and

\begin{equation*}\label{e106}
      E|S_{n} - ES_{n}|^p = O \biggl( \frac{n^{p}}{\psi (n)} \biggr) \qquad
      \mbox{for some function } \psi \in \Psi_{c}.
\end{equation*}
Then relation~\eqref{e102} holds.

\end{predl}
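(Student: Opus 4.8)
The plan is to use the subsequence method: establish the conclusion first along a geometrically growing subsequence, and then fill in the gaps using the monotonicity of the partial sums together with~\eqref{e105}. Fix a parameter $\alpha > 1$ and set $n_j = \lfloor \alpha^j \rfloor$. The argument will produce, for each such $\alpha$, bounds on the $\limsup$ and $\liminf$ of $(S_n - ES_n)/n$ that are proportional to $\alpha - 1$; letting $\alpha \to 1^{+}$ at the end will then force the limit to be $0$ almost surely.

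For the first stage, I would apply Markov's inequality of order $p$ to each term of the subsequence: for fixed $\varepsilon > 0$,
\begin{equation*}
P\bigl(|S_{n_j} - ES_{n_j}| > \varepsilon n_j\bigr) \leqslant \frac{E|S_{n_j} - ES_{n_j}|^p}{\varepsilon^p n_j^p} \leqslant \frac{C'}{\varepsilon^p \, \psi(n_j)},
\end{equation*}
using the moment hypothesis $E|S_n - ES_n|^p = O(n^p/\psi(n))$. The crucial auxiliary fact is that $\psi \in \Psi_c$ forces $\sum_j 1/\psi(n_j) < \infty$ for a geometric subsequence. This follows by comparing the series $\sum_n 1/(n\psi(n))$ with $\sum_j 1/\psi(n_j)$: since $\psi$ is non-decreasing, on each block $n_{j-1} \leqslant n < n_j$ one has $1/\psi(n) \geqslant 1/\psi(n_j)$, and the number of integers in the block is comparable to $n_j$, so that $1/\psi(n_j)$ is dominated by a constant multiple of $\sum_{n_{j-1} \leqslant n < n_j} 1/(n\psi(n))$. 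Summing over $j$ and invoking convergence of $\sum 1/(n\psi(n))$ gives the claim, and the Borel--Cantelli lemma then yields $(S_{n_j} - ES_{n_j})/n_j \to 0$ almost surely.

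For the second stage, I would exploit that the $X_k$ are non-negative, so both $S_n$ and $ES_n$ are non-decreasing in $n$. For $n_j \leqslant n < n_{j+1}$, monotonicity gives $S_{n_j} \leqslant S_n \leqslant S_{n_{j+1}}$ and $ES_{n_j} \leqslant ES_n \leqslant ES_{n_{j+1}}$. From the upper comparison,
\begin{equation*}
S_n - ES_n \leqslant (S_{n_{j+1}} - ES_{n_{j+1}}) + (ES_{n_{j+1}} - ES_n) \leqslant (S_{n_{j+1}} - ES_{n_{j+1}}) + E(S_{n_{j+1}} - S_{n_j}),
\end{equation*}
where in the last step I replaced $ES_n$ by the smaller $ES_{n_j}$. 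Since $n_{j+1} - n_j \to \infty$, condition~\eqref{e105} applies eventually and bounds the last term by $C(n_{j+1} - n_j)$. Dividing by $n \geqslant n_j$ and using $n_{j+1}/n_j \to \alpha$ together with the first stage gives $\limsup_n (S_n - ES_n)/n \leqslant C(\alpha - 1)$ almost surely. A symmetric lower comparison, starting from $S_n - ES_n \geqslant (S_{n_j} - ES_{n_j}) - E(S_{n_{j+1}} - S_{n_j})$, yields $\liminf_n (S_n - ES_n)/n \geqslant -C(\alpha - 1)$.

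The step I expect to be the main obstacle is precisely the application of~\eqref{e105} in the gap-filling, because that hypothesis is only assumed for sufficiently large increments $n - m$. A naive comparison of $ES_n$ with $ES_{n_{j+1}}$ involves the increment $n_{j+1} - n$, which may be small when $n$ is close to $n_{j+1}$. The device that resolves this is to use the monotonicity of the expectations to replace the uncontrolled increment by $n_{j+1} - n_j$, which tends to infinity with $j$; this guarantees that~\eqref{e105} is applicable for all large $j$ uniformly over $n$ in the block. Once both one-sided bounds are in hand for every $\alpha > 1$, letting $\alpha \to 1^{+}$ along a countable sequence (so that the exceptional null sets may be united) completes the proof of~\eqref{e102}.
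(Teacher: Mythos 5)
Your proof is correct, but it is not the route the paper takes: the paper never proves Theorem~\ref{P104} directly, deriving it instead as the case $a_n = n$ of Theorem~\ref{T201}, whose hypothesis~\eqref{e201} is weaker than~\eqref{e105}. Your argument is essentially the original Petrov--Korchevsky one: Markov's inequality of order $p$ combined with the convergence of $\sum_j 1/\psi(n_j)$ along a geometric subsequence (the paper isolates exactly this fact as Lemma~\ref{Lem301}) gives the a.s.\ convergence along $n_j = \lfloor \alpha^j \rfloor$, and then non-negativity of the $X_k$ plus~\eqref{e105} interpolates across each block; your device of replacing the uncontrolled increment $n_{j+1}-n$ by $n_{j+1}-n_j \to \infty$ via monotonicity of the means is precisely what makes~\eqref{e105} applicable, and letting $\alpha \downarrow 1$ along a countable sequence finishes the proof. (The only point you gloss over is that after dividing by $n \geqslant n_j$ one should pass to $|S_{n_{j+1}} - ES_{n_{j+1}}|/n_j$ before taking limits, but that is routine.) The paper's proof of Theorem~\ref{T201} must work harder precisely because it discards~\eqref{e105}: with only $ES_k/a_k \leqslant A$ available one cannot bound $ES_{n_{j+1}} - ES_{n_j}$ by the block length, so the author additionally discretizes the ratios $ES_k/a_k$ into finitely many levels $[s\varepsilon,(s+1)\varepsilon)$, applies Borel--Cantelli to the extremal indices $k_s^{\pm}(l)$ of each level within each geometric block, and obtains a sandwich with error terms $(\alpha-1)A + \varepsilon$ in place of your $C(\alpha-1)$. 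Your approach buys simplicity under the stronger increment hypothesis; the paper's buys the generalization to arbitrary norming sequences and the weaker mean condition~\eqref{e201}.
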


\noindent (Theorem~\ref{P103} corresponds to the case $p = 2$).

The aim of present work is to generalize Theorems~\ref{P102} and~\ref{P104} using an arbitrary norming sequence in place of the classical normalization. Also we present a generalization of Theorem~1 in~\cite{Ko10}.

To prove the theorems of the work we use methods developed by Petrov~\cite{Petr08, Petr09}, Chandra and Goswami~\cite{ChGos92}, and Cs\"{o}rg\H{o}, Tandori, and Totik~\cite{CsTanTot83}.

\newpage

\noindent {\Large{\textbf{2. Main results}}}

\medskip

\begin{theorem}\label{T201}

Let $\{X_{n}\}_{n=1}^{\infty}$ be a sequence of non-negative random variables with finite absolute moments of some order $p \geqslant 1$. Assume that $\{a_{n}\}_{n=1}^{\infty}$ is non-decreasing unbounded sequence of positive numbers. If

\begin{equation}\label{e201}
ES_{n} = O (a_{n})
\end{equation}
and

\begin{equation}\label{e202}
      E|S_{n} - ES_{n}|^{p} = O \biggl( \frac{a_{n}^{p}}{\psi (a_{n})} \biggr) \qquad
      \mbox{for some function } \psi \in \Psi_{c},
\end{equation}
then
\begin{equation}\label{e203}
      \frac{S_{n} - ES_{n}}{a_{n}} \to 0 \qquad \mbox{a.s.}
\end{equation}

\end{theorem}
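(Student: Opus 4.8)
The plan is to combine a subsequence (blocking) argument with the monotonicity of $S_n$ that non-negativity of the $X_n$ provides, choosing the blocks so that they are adapted \emph{simultaneously} to the mean $ES_n$ and to the norming $a_n$; this is the point where Petrov's blocking idea and the Cs\"org\H{o}--Tandori--Totik technique must be merged. First I would dispose of a trivial case. Since $X_n\ge 0$, the sequence $ES_n$ is non-decreasing; if it stays bounded then $\sum_n EX_n<\infty$, so $S_n$ converges a.s. to a finite limit, and because $a_n\to\infty$ the ratio in~\eqref{e203} tends to $0$ a.s. Hence I may assume $ES_n\to\infty$.

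Fix $\beta>1$ and define a blocking $m_0<m_1<\cdots$ by
\[
m_{k+1}=\min\{\,n>m_k:\ ES_n>\beta\,ES_{m_k}\ \text{ or }\ a_n>\beta\,a_{m_k}\,\},
\]
starting from an index $m_0$ with $ES_{m_0}>0$. This double threshold is the heart of the construction: for $m_k\le n<m_{k+1}$ minimality forces \emph{both} $ES_n\le\beta\,ES_{m_k}$ and $a_n\le\beta\,a_{m_k}$, while at each block boundary at least one of $ES$, $a$ has grown by a factor $\beta$. Counting the two kinds of steps and invoking $ES_n=O(a_n)$ from~\eqref{e201} (so $ES_{m_k}\le C\,a_{m_k}$) shows $a_{m_k}\ge c\,(\sqrt\beta)^{\,k}$ for some $c>0$, i.e. $a_{m_k}$ grows geometrically.

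The main estimate uses monotonicity of both $S_n$ and $ES_n$. For $m_k\le n\le m_{k+1}-1$, from $S_{m_k}\le S_n\le S_{m_{k+1}-1}$ and $ES_{m_k}\le ES_n\le ES_{m_{k+1}-1}$ one obtains, bounding the two one-sided deviations,
\[
|S_n-ES_n|\;\le\;|S_{m_{k+1}-1}-ES_{m_{k+1}-1}|+|S_{m_k}-ES_{m_k}|+\bigl(ES_{m_{k+1}-1}-ES_{m_k}\bigr).
\]
Dividing by $a_n\ge a_{m_k}$ and using $ES_{m_{k+1}-1}\le\beta\,ES_{m_k}\le\beta C\,a_{m_k}$ yields, for every $n$ in the block,
\[
\frac{|S_n-ES_n|}{a_n}\;\le\;\frac{|S_{m_{k+1}-1}-ES_{m_{k+1}-1}|}{a_{m_k}}+\frac{|S_{m_k}-ES_{m_k}|}{a_{m_k}}+(\beta-1)C .
\]
The two random terms do not depend on $n$, so it suffices to show each tends to $0$ along $k$. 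By~\eqref{e202} and Markov's inequality, $\sum_k E|S_{m_k}-ES_{m_k}|^{p}/a_{m_k}^{p}=O\bigl(\sum_k 1/\psi(a_{m_k})\bigr)$, and the geometric growth of $a_{m_k}$ together with $\psi\in\Psi_c$ makes this series converge (a Cauchy-condensation estimate, as in~\cite{Ko10}); Borel--Cantelli then gives $|S_{m_k}-ES_{m_k}|/a_{m_k}\to0$ a.s. The right-endpoint term is handled the same way, the extra factor being absorbed by $a_{m_{k+1}-1}\le\beta\,a_{m_k}$, so it too vanishes a.s.

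Consequently, for each fixed $\beta>1$ there is an event of probability $1$ on which $\limsup_n |S_n-ES_n|/a_n\le(\beta-1)C$. Intersecting the events for $\beta=1+1/j$, $j\ge1$ (a countable intersection, still of full measure) and letting $j\to\infty$ forces the $\limsup$ to be $0$, which is exactly~\eqref{e203}. The hard part is the deterministic mean-increment term $ES_{m_{k+1}-1}-ES_{m_k}$: with only $ES_n=O(a_n)$ it cannot be made $o(a_{m_k})$ by blocking on $a_n$ alone, and blocking on $ES_n$ alone loses control of the ratios $a_{m_{k+1}-1}/a_{m_k}$. The simultaneous double blocking, followed by $\beta\downarrow1$, is precisely what resolves this obstacle.
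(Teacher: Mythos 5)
Your argument is correct, and it proves the theorem by a blocking construction that differs in its details from the one in the paper. The paper also controls $a_{n}$ and $ES_{n}$ simultaneously, but it does so by crossing a geometric partition of the range of $a_{n}$ (blocks $\alpha^{m_{l}}\leqslant a_{n}<\alpha^{m_{l}+1}$) with an $\varepsilon$-discretization of the bounded ratio $ES_{n}/a_{n}\in[0,A]$ into $L+1=[A/\varepsilon]+1$ levels; within each cell it takes the smallest and largest index, sandwiches $S_{n}$ between the sums at those two indices using non-negativity, and applies Chebyshev, the lemma on $\sum 1/\psi(b^{n})$, and Borel--Cantelli to the $2(L+1)$ resulting subsequences, ending with $\limsup\leqslant(\alpha-1)A+\varepsilon$ and letting $\alpha\downarrow1$, $\varepsilon\downarrow0$. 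You instead exploit the monotonicity of $ES_{n}$ itself (rather than of the non-monotone ratio $ES_{n}/a_{n}$) and replace the crossed partition by a single greedy stopping sequence with the double threshold ``$ES_{n}>\beta ES_{m_{k}}$ or $a_{n}>\beta a_{m_{k}}$''; the observation that the product $ES_{m_{k}}a_{m_{k}}$ grows by a factor $\beta$ at every step, combined with $ES_{m_{k}}\leqslant Ca_{m_{k}}$, gives the geometric lower bound $a_{m_{k}}\geqslant c\beta^{k/2}$ needed for the Borel--Cantelli series, and the in-block error is the clean deterministic term $(\beta-1)C$. The probabilistic core (Markov's inequality at order $p$, convergence of $\sum_{k}1/\psi(a_{m_{k}})$ via the monotonicity of $\psi$ and the lemma that $\sum 1/\psi(b^{k})<\infty$ for $b>1$, then Borel--Cantelli along the block endpoints) is identical in both proofs. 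What your version buys is the elimination of the $\varepsilon$-levels and of the $L+1$ families of subsequences, at the price of having to verify separately the (easy) case of bounded $ES_{n}$ and to establish the geometric growth of $a_{m_{k}}$; the paper's version avoids any growth estimate for its blocks because they are defined directly by powers of $\alpha$. Both limiting procedures ($\beta\downarrow1$ over a countable set versus $\alpha\downarrow1$, $\varepsilon\downarrow0$) are equally valid.
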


Theorem~\ref{T201} generalizes Theorem~\ref{P104}, which corresponds to the case $a_{n} = n$ for all ${n \geqslant 1}$. Moreover, in the case $a_{n} = n$ for all ${n \geqslant 1}$, condition~\eqref{e201} is less restrictive than assumption~\eqref{e105}.
{\sloppy

}

Let us indicate two consequences of Theorem~\ref{T201}.

\begin{theorem}\label{T202}
Let $\{X_{n}\}_{n=1}^{\infty}$ be a sequence of non-negative random variables with finite variances. Assume that $\{a_{n}\}_{n=1}^{\infty}$ is non-decreasing unbounded sequence of positive numbers. If condition~\eqref{e201} is satisfied and

\begin{equation*}\label{e204}
Var (S_{n}) = O \biggl( \frac{a_{n}^{2}}{\psi (a_{n})} \biggr) \qquad
      \mbox{for some function } \psi \in \Psi_{c},
\end{equation*}
then relation~\eqref{e203} holds.

\end{theorem}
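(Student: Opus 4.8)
The plan is to derive Theorem~\ref{T202} directly as a corollary of Theorem~\ref{T201} by specializing to the case $p = 2$. The key observation is that the second moment $E|S_n - ES_n|^2$ is, by definition, exactly $Var(S_n)$. Thus condition~\eqref{e202} with $p = 2$ reads
\[
Var(S_n) = E|S_n - ES_n|^2 = O\biggl( \frac{a_n^2}{\psi(a_n)} \biggr),
\]
which is precisely the variance hypothesis assumed in Theorem~\ref{T202}.

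First I would verify that all the structural hypotheses of Theorem~\ref{T201} are met: the random variables $\{X_n\}$ are non-negative, and having finite variances is equivalent to having finite absolute moments of order $p = 2$, so the moment requirement of Theorem~\ref{T201} holds with this value of $p$. The sequence $\{a_n\}$ is the same non-decreasing unbounded sequence of positive numbers in both statements, and condition~\eqref{e201}, namely $ES_n = O(a_n)$, is assumed identically in both theorems.

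Next I would match the two remaining conditions. Applying Theorem~\ref{T201} with $p = 2$, condition~\eqref{e202} becomes exactly the variance bound hypothesized in Theorem~\ref{T202}, with the same function $\psi \in \Psi_c$. Since every hypothesis of Theorem~\ref{T201} is satisfied, its conclusion~\eqref{e203}, that $(S_n - ES_n)/a_n \to 0$ almost surely, holds, which is the desired conclusion of Theorem~\ref{T202}.

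There is essentially no obstacle here, since Theorem~\ref{T202} is a pure specialization of Theorem~\ref{T201} obtained by setting $p = 2$; the only point requiring a moment's care is the elementary identity $E|S_n - ES_n|^2 = Var(S_n)$, which identifies the moment condition with the variance condition. I expect the entire proof to amount to this one-line reduction.
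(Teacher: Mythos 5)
Your proposal is correct and matches the paper exactly: the paper derives Theorem~\ref{T202} by putting $p = 2$ in Theorem~\ref{T201}, using the identity $E|S_{n} - ES_{n}|^{2} = Var(S_{n})$ just as you do.
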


We arrive at this proposition putting $p = 2$ in Theorem~\ref{T201}.

\begin{theorem}\label{T203}
Let $\{X_{n}\}_{n=1}^{\infty}$ be a sequence of non-negative random variables with finite absolute moments of some order $p \geqslant 1$. Assume that $\{w_{n}\}_{n=1}^{\infty}$ is a sequence of positive numbers,

\begin{equation*}\label{e205}
W_{n} = \sum_{k=1}^{n} w_{k}, \qquad T_{n} = \sum_{k=1}^{n} w_{k} X_{k}.
\end{equation*}
Suppose that $W_{n} \to \infty$ $(n \to \infty)$,

\begin{equation}\label{e206}
ET_{n} = O (W_{n}),
\end{equation}

\begin{equation*}\label{e207}
      E|T_{n} - ET_{n}|^{p} = O \biggl( \frac{W_{n}^{p}}{\psi (W_{n})} \biggr) \qquad
      \mbox{for some function } \psi \in \Psi_{c}.
\end{equation*}
Then

\begin{equation*}\label{e208}
      \frac{T_{n} - ET_{n}}{W_{n}} \to 0 \qquad \mbox{a.s.}
\end{equation*}

\end{theorem}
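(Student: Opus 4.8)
The plan is to deduce Theorem~\ref{T203} directly from Theorem~\ref{T201} by recognizing the weighted sums $T_n$ as ordinary partial sums of a new sequence, with the cumulative weights $W_n$ playing the role of the norming sequence. No new analytic idea is needed; the work lies entirely in checking that the hypotheses match up.

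First I would set $Y_k := w_k X_k$ for each $k \geqslant 1$. Since $w_k > 0$ and $X_k \geqslant 0$, every $Y_k$ is non-negative, and since $X_k$ has a finite absolute moment of order $p$ while $w_k$ is a positive constant, $Y_k$ has one as well. Writing $S_n := \sum_{k=1}^{n} Y_k$, we have $S_n = \sum_{k=1}^{n} w_k X_k = T_n$, so $\{Y_k\}$ is a sequence of non-negative random variables with finite $p$-th moments whose partial sums are exactly the $T_n$.

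Next I would take $a_n := W_n$ as the norming sequence. Because every $w_k$ is strictly positive, $W_n = \sum_{k=1}^{n} w_k$ is strictly increasing and hence non-decreasing, while the assumption $W_n \to \infty$ makes $\{a_n\}$ unbounded; thus $\{a_n\}$ meets the requirements imposed on the norming sequence in Theorem~\ref{T201}. Under these identifications, condition~\eqref{e201} reads $ES_n = ET_n = O(W_n) = O(a_n)$, which is precisely~\eqref{e206}, and condition~\eqref{e202} reads $E|S_n - ES_n|^p = E|T_n - ET_n|^p = O\bigl(W_n^p/\psi(W_n)\bigr) = O\bigl(a_n^p/\psi(a_n)\bigr)$ for the same $\psi \in \Psi_c$, which is exactly the third hypothesis of Theorem~\ref{T203}.

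Finally, Theorem~\ref{T201} yields $(S_n - ES_n)/a_n \to 0$ a.s., and substituting back $S_n = T_n$ and $a_n = W_n$ this is precisely $(T_n - ET_n)/W_n \to 0$ a.s., as claimed. The argument is entirely formal, so I do not expect any genuine obstacle; the one point requiring attention is confirming that the reparametrized norming sequence $a_n = W_n$ truly satisfies the monotonicity and unboundedness requirements of Theorem~\ref{T201}, and both of these follow at once from $w_k > 0$ and $W_n \to \infty$.
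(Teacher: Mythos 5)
Your proposal is correct and is exactly the argument the paper gives: the author likewise proves Theorem~\ref{T203} by setting $Y_{n} = w_{n} X_{n}$ and $a_{n} = W_{n}$ and applying Theorem~\ref{T201}. Your version merely spells out the routine verification of the hypotheses in more detail.
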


Theorem~\ref{T203} is a generalization of Theorem~1 in~\cite{KoPetr10} which includes condition

\begin{equation*}\label{e209}
       \sum_{k=m}^{n} w_{k} EX_{k} \leqslant C \sum_{k=m}^{n} w_{k} \qquad \mbox{for all sufficiently large } \; n - m,
\end{equation*}

\noindent instead assumption~\eqref{e206}. To prove Theorem~\ref{T203} we can put $a_{n} = W_{n}$, $Y_{n} = w_{n} X_{n}$ for all ${n \geqslant 1}$ and apply Theorem~\ref{T201} to the sequence of random variables $\{Y_{n}\}_{n=1}^{\infty}$.
{\sloppy

}

The next theorem generalizes Theorem~\ref{P102}, which corresponds to the case $a_{n} = n$ for all $n \geqslant 1$.

\begin{theorem}\label{T204}

Let $\{X_{n}\}_{n=1}^{\infty}$ be a sequence of independent random variables with finite variances. Assume that $\{a_{n}\}_{n=1}^{\infty}$ is non-decreasing unbounded sequence of positive numbers such that 

\begin{equation}\label{e210}
    \frac{a_{2n}}{a_{n}} \leqslant Q \qquad \mbox{for all sufficiently large } \; n,
\end{equation}
where $Q$ is a constant. If

\begin{equation}\label{e211}
      Var (S_{n}) = O \biggl( \frac{a_{n}^{2}}{\psi (n)} \biggr) \qquad
      \mbox{for some function } \psi \in \Psi_{c},
\end{equation}
then relation~\eqref{e203} holds.

\end{theorem}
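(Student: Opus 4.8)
The plan is to run the classical subsequence-plus-maximal-inequality scheme, exploiting the independence that is absent from Theorems~\ref{T201}--\ref{T203} through Kolmogorov's maximal inequality, with the doubling condition~\eqref{e210} serving to keep the normalizing sequence essentially constant across each dyadic block. Since only the centered sums matter, I first set $Z_{k} = X_{k} - EX_{k}$ and $Y_{n} = S_{n} - ES_{n} = \sum_{k=1}^{n} Z_{k}$; the $Z_{k}$ are independent with mean zero, and hypothesis~\eqref{e211} reads $Var (Y_{n}) = \sum_{k=1}^{n} Var (Z_{k}) = O(a_{n}^{2}/\psi(n))$. The goal is $Y_{n}/a_{n} \to 0$ a.s. In contrast to Theorem~\ref{T201}, no analogue of condition~\eqref{e201} is needed here, because centering removes the mean outright.

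Next I record the elementary consequence of $\psi \in \Psi_{c}$ that $\sum_{k} 1/\psi(2^{k}) < \infty$: grouping the convergent series $\sum 1/(n\psi(n))$ into the dyadic blocks $[2^{j}, 2^{j+1})$ and using that $\psi$ is non-decreasing gives $\sum_{n} 1/(n\psi(n)) \geq (\log 2)\sum_{j} 1/\psi(2^{j+1})$ up to a multiplicative constant, so the block series converges. I then take the subsequence $n_{k} = 2^{k}$ and prove convergence along it. By Chebyshev's inequality, $P(|Y_{2^{k}}| > \varepsilon\, a_{2^{k}}) \leq Var (Y_{2^{k}})/(\varepsilon^{2} a_{2^{k}}^{2}) = O(1/\psi(2^{k}))$, which is summable in $k$, so the Borel--Cantelli lemma yields $Y_{2^{k}}/a_{2^{k}} \to 0$ a.s.

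The main step is to control the oscillation of $Y_{n}$ inside each block $2^{k} \leq n < 2^{k+1}$, and here the doubling condition is decisive. Since $a_{n}$ is non-decreasing and $a_{2^{k+1}} \leq Q\, a_{2^{k}}$ for all large $k$, one has $a_{2^{k}} \leq a_{n} \leq Q\, a_{2^{k}}$ throughout the block, so $a_{n}$ may be replaced by $a_{2^{k}}$ at the cost of the factor $Q$. Applying Kolmogorov's maximal inequality to the independent mean-zero increments $Y_{n} - Y_{2^{k}} = \sum_{j=2^{k}+1}^{n} Z_{j}$, and bounding their total variance over the block by $Var (Y_{2^{k+1}})$, gives
\begin{equation*}
P\Bigl( \max_{2^{k} \leq n < 2^{k+1}} |Y_{n} - Y_{2^{k}}| > \varepsilon\, a_{2^{k}} \Bigr) \leq \frac{Var (Y_{2^{k+1}})}{\varepsilon^{2} a_{2^{k}}^{2}} = O\!\left( \frac{a_{2^{k+1}}^{2}}{a_{2^{k}}^{2}\,\psi(2^{k+1})} \right),
\end{equation*}
and by~\eqref{e210} the last bound is $O(Q^{2}/\psi(2^{k+1}))$, again summable in $k$. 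Borel--Cantelli then forces $\max_{2^{k} \leq n < 2^{k+1}} |Y_{n} - Y_{2^{k}}|/a_{2^{k}} \to 0$ a.s.

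Finally I combine the two estimates. For $2^{k} \leq n < 2^{k+1}$, using $a_{n} \geq a_{2^{k}}$,
\begin{equation*}
\frac{|Y_{n}|}{a_{n}} \leq \frac{|Y_{2^{k}}|}{a_{2^{k}}} + \frac{|Y_{n} - Y_{2^{k}}|}{a_{2^{k}}},
\end{equation*}
and both terms tend to $0$ almost surely as $k \to \infty$, which establishes~\eqref{e203}. The only genuine obstacle is the block-oscillation estimate, and it is overcome precisely by the doubling hypothesis~\eqref{e210}: it keeps $a_{n}$ comparable to $a_{2^{k}}$ across the block, so that Kolmogorov's inequality, controlled by the block variance $O(a_{2^{k+1}}^{2}/\psi(2^{k+1}))$, produces a summable bound. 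Without~\eqref{e210} the normalization could grow within a block faster than the variance hypothesis can absorb, and the argument would break down.
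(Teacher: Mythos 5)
Your proof is correct and follows essentially the same route as the paper: reduce to the centered case, apply Chebyshev plus the dyadic summability of $\sum 1/\psi(2^{k})$ along the subsequence $2^{k}$, and control the in-block oscillation via Kolmogorov's maximal inequality, with the doubling condition~\eqref{e210} absorbing the ratio $a_{2^{k+1}}/a_{2^{k}}$. The only cosmetic difference is that you normalize the block increments by $a_{2^{k}}$ and carry the factor $Q^{2}$ inside the probability bound, whereas the paper normalizes by $a_{2^{k+1}}$ and extracts the ratio as a separate multiplicative factor.
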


\begin{rem}\label{Rem201}
We cannot omit condition~\eqref{e210} in Theorem~\ref{T204} (See Example~1 below).
{\sloppy

}

\end{rem}

As mentioned above, in~\cite{Ko10} was proved that condition~\eqref{e104} implies~\eqref{e101}. The next theorem generalizes this result.

\begin{theorem}\label{T205}

Let $\{b_{n}\}_{n=1}^{\infty}$ be a sequence of non-negative numbers. Assume that $\{a_{n}\}_{n=1}^{\infty}$ is non-decreasing unbounded sequence of positive numbers such that condition~\eqref{e210} is satisfied. If

\begin{equation}\label{e212}
      \sum_{k=1}^{n} b_{k} = O \biggl( \frac{a_{n}^{2}}{\psi (n)} \biggr) \qquad
      \mbox{for some function } \psi \in \Psi_{c},
\end{equation}
then

\begin{equation}\label{e213}
\sum_{n=1}^{\infty} \frac{b_{n}}{a_{n}^{2}} < \infty.
\end{equation}

\end{theorem}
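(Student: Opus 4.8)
The plan is to treat this as a purely deterministic summation-by-parts estimate and to extract the force of the doubling condition~\eqref{e210} via a dyadic decomposition. Write $B_n = \sum_{k=1}^{n} b_k$, so that hypothesis~\eqref{e212} furnishes a constant $M$ with $B_n \leqslant M a_n^2/\psi(n)$ for all sufficiently large $n$. Since $b_n = B_n - B_{n-1}$ with $B_0 = 0$, Abel summation gives
\[
\sum_{n=1}^{N} \frac{b_n}{a_n^2} = \frac{B_N}{a_N^2} + \sum_{n=1}^{N-1} B_n \left( \frac{1}{a_n^2} - \frac{1}{a_{n+1}^2} \right),
\]
where every term on the right is non-negative because $\{a_n\}$ is non-decreasing. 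The boundary term is bounded by $M/\psi(N)$ (hence even tends to $0$, as $\psi \in \Psi_c$ forces $\psi(n) \to \infty$), so it remains to control the sum. Substituting the estimate for $B_n$ turns it into
\[
\sum_{n=1}^{N-1} B_n \left( \frac{1}{a_n^2} - \frac{1}{a_{n+1}^2} \right) \leqslant M \sum_{n=1}^{N-1} \frac{1}{\psi(n)} \left( 1 - \frac{a_n^2}{a_{n+1}^2} \right).
\]

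First I would linearize the inner factor with the elementary inequality $1 - t \leqslant \log(1/t)$ for $t \in (0,1]$, which yields $1 - a_n^2/a_{n+1}^2 \leqslant 2(\log a_{n+1} - \log a_n)$ and thereby converts the factor into a telescoping increment of $\log a_n$. Then I would split the index range into dyadic blocks $2^k \leqslant n < 2^{k+1}$: on each block the monotonicity of $\psi$ allows pulling out $1/\psi(2^k)$, and the telescoping increments collapse to $2\log(a_{2^{k+1}}/a_{2^k})$, which by~\eqref{e210} (applied with $n = 2^k$) is at most $2\log Q$ for all large $k$. This produces a per-block bound of order $1/\psi(2^k)$ and reduces the whole matter to proving $\sum_k 1/\psi(2^k) < \infty$.

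The final ingredient is the standard dyadic equivalence, for non-decreasing $\psi$, between the convergence of $\sum_n 1/(n\psi(n))$ — which holds precisely because $\psi \in \Psi_c$ — and that of $\sum_k 1/\psi(2^k)$; the latter follows by bounding each block $\sum_{2^k \leqslant n < 2^{k+1}} 1/(n\psi(n))$ from below by a constant multiple of $1/\psi(2^{k+1})$. Combining the three steps bounds the partial sums $\sum_{n=1}^{N} b_n/a_n^2$ uniformly in $N$, and since the summands are non-negative the series~\eqref{e213} converges.

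The hard part will be the passage from the doubling hypothesis~\eqref{e210} to the convergence, and this is exactly where the dyadic geometry is essential: the doubling bound is what keeps the telescoped logarithmic increment over each dyadic block uniformly bounded, while the dyadic comparison is what converts $\psi \in \Psi_c$ into the summability $\sum_k 1/\psi(2^k) < \infty$. Aligning these two dyadic decompositions is the crux; by contrast, the boundary term, the $1-t \leqslant \log(1/t)$ estimate, and the finitely many initial indices where the hypotheses may fail are all routine bookkeeping.
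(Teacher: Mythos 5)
Your argument is correct, but it proceeds by an entirely different route from the paper. The paper proves Theorem~\ref{T205} by contradiction through probability theory: if $\sum b_n/a_n^2$ diverged, one could (by the converse construction in~\cite{Petr75}) build independent, mean-zero random variables $X_n$ with $Var(X_n)=b_n$ for which $(S_n-ES_n)/a_n \not\to 0$ a.s.; yet such a sequence satisfies the hypotheses of Theorem~\ref{T204}, which forces the convergence --- a contradiction. Your proof is purely deterministic: Abel summation, the bound $1-t\leqslant \log(1/t)$ to telescope the increments of $\log a_n$, the doubling condition~\eqref{e210} to bound each dyadic block's logarithmic increment by $\log Q$, and the dyadic comparison $\sum_k 1/\psi(2^k)<\infty$ (which is precisely Lemma~\ref{Lem301} of the paper with $b=2$, here rederived). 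Every step checks out: the summation-by-parts identity is right with $B_0=0$, all terms are non-negative by monotonicity of $\{a_n\}$, the boundary term is $O(1/\psi(N))$ and $\psi(n)\to\infty$ since otherwise $\sum 1/(n\psi(n))$ would diverge, and the finitely many indices where~\eqref{e212},~\eqref{e210}, or the positivity of $\psi$ may fail are indeed harmless. What your approach buys is self-containedness and an explicit quantitative bound on the partial sums; it needs neither the nontrivial converse construction of independent random variables with prescribed variances nor Theorem~\ref{T204}, and it is arguably the more natural proof for what is, after all, a purely deterministic statement about sequences. The paper's route is shorter on the page but leans on two substantial external inputs.
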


\begin{rem}\label{Rem202}

We cannot omit condition~\eqref{e210} in Theorem~\ref{T205}.
{\sloppy

}

\end{rem}

\noindent Indeed, let $b_{1} = b_{2} = 1$, $b_{n} = 2^{n}/n - 2^{n-1}/(n-1)$ for all $n \geqslant 3$. Then

\begin{equation*}\label{e214}
\sum_{k=1}^{n} b_{k} = \frac{2^{n}}{n} \qquad \mbox{for all } \; n \geqslant 3.
\end{equation*}

\noindent Thus, the sequence $\{b_{n}\}_{n=1}^{\infty}$ satisfies condition~\eqref{e212} with $a_{n} = 2^{n/2}$, $n \geqslant 1$ and function $\psi (x) = x$, $x > 0$ (belonging to~$\Psi_{c}$). But relation~\eqref{e213} does not hold since
{\sloppy

}

\begin{equation*}\label{e215}
\sum_{n=3}^{\infty} \frac{b_{n}}{a_{n}^{2}} = \sum_{n=3}^{\infty} \frac{2^{n}/n - 2^{n-1}/(n-1)}{2^{n}} = \sum_{n=3}^{\infty} \frac{n-2}{2 n (n-1)} = \infty.
\end{equation*}

\bigskip

\noindent {\Large{\textbf{3. Proofs}}}

\medskip

\noindent To prove Theorems~\ref{T201} and~\ref{T204} we need the following proposition.

\begin{lemma}[see~\cite{Petr08}]\label{Lem301} If $\psi(x) \in \Psi_{c}$, then the series $\sum 1/\psi(b^{n})$ converges for every $b>1$.
{\sloppy

}

\end{lemma}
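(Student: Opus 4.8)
The plan is to compare the geometric series $\sum_n 1/\psi(b^n)$ with the convergent series $\sum_k 1/(k\psi(k))$ by a Cauchy-condensation argument. First I would fix $b > 1$ and, using that $\psi$ is positive and non-decreasing only on $x > x_0$, choose an index $N_0$ large enough that $b^{N_0} > x_0$ and $b^n(b-1) \ge 2$ for all $n \ge N_0$; the finitely many smaller indices contribute only finitely much and may be ignored.

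Next I would partition the large integers into the pairwise disjoint blocks $I_n = \{k \in \mathbb{N} : b^n \le k < b^{n+1}\}$ for $n \ge N_0$. The key estimate is that on each block the monotonicity of $\psi$ together with $x_0 < b^n \le k < b^{n+1}$ gives $\psi(k) \le \psi(b^{n+1})$, while $k < b^{n+1}$ gives $1/k > 1/b^{n+1}$, so that $1/(k\psi(k)) \ge 1/(b^{n+1}\psi(b^{n+1}))$ for every $k \in I_n$. Since $I_n$ contains at least $b^{n+1}-b^n-1 \ge \tfrac{1}{2}b^n(b-1)$ integers by the choice of $N_0$, summing over the block yields
$$\sum_{k\in I_n}\frac{1}{k\psi(k)} \ge \frac{b^n(b-1)/2}{b^{n+1}\psi(b^{n+1})} = \frac{b-1}{2b}\cdot\frac{1}{\psi(b^{n+1})}.$$

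Summing this over $n \ge N_0$ and exploiting the disjointness of the blocks would then give
$$\sum_{n\ge N_0}\frac{1}{\psi(b^{n+1})} \le \frac{2b}{b-1}\sum_{n\ge N_0}\sum_{k\in I_n}\frac{1}{k\psi(k)} \le \frac{2b}{b-1}\sum_{k=1}^\infty\frac{1}{k\psi(k)} < \infty,$$
so that the tail, and hence the whole, of $\sum_n 1/\psi(b^n)$ converges. I expect the only delicate point to be the counting step — verifying that each block really contains of order $b^n$ integers and choosing $N_0$ to absorb both the rounding loss and the monotonicity threshold $x_0$ — but this is routine bookkeeping rather than a substantive obstacle.
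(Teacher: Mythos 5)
Your argument is correct: the block-comparison (Cauchy condensation) estimate is carried out carefully, including the counting of integers in $[b^n,b^{n+1})$ and the handling of the threshold $x_0$. The paper itself gives no proof of this lemma --- it simply cites Petrov --- and your proof is essentially the standard condensation argument used there, so there is nothing to add.
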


\begin{proof}[Proof of Theorem~\ref{T201}]

\noindent By assumption~\eqref{e201} there is a constant $A$ such that inequality

\begin{equation*}\label{e301}
 ES_{n}/a_{n} \leqslant A
\end{equation*}

\noindent is satisfied for each $n \geqslant 1$. Let $\alpha > 1$, $\varepsilon > 0$ and $L = [A/\varepsilon]$, the integer part of $A/\varepsilon$. Put
{\sloppy

}

\begin{equation*}\label{e302}
      m_{1} = \inf \{ m \geqslant 0 : \alpha^{m} \leqslant a_{n} < \alpha^{m+1} \; \mbox{for some } n \},
\end{equation*}

\begin{equation*}\label{e303}
      m_{l} = \inf \{ m > m_{l-1} : \alpha^{m} \leqslant a_{n} < \alpha^{m+1} \; \mbox{for some } n \} \qquad \mbox{ for } l \geqslant 2.
\end{equation*}

\noindent We recall that $a_{n} \uparrow \infty$, so $\{m_{l}\}_{l=1}^{\infty}$ is a subsequence of integers satisfying $0 \leqslant m_{1} < m_{2} < \ldots \;$ and $\; m_{l} \to \infty$ $(l \to \infty)$. For each pair of integers $l$ and $s$ such that $l \geqslant 1$, $s = 0,1,\ldots,L$, put
{\sloppy

}

\begin{equation*}\label{e304}
       A_{s} (l) = \{ k : \alpha^{m_{l}} \leqslant a_{k} < \alpha^{m_{l}+1}, \; \frac{ES_{k}}{a_{k}} \in [s \varepsilon, (s+1)\varepsilon) \}.
\end{equation*}

\noindent Let $k^{-}_{s} (l) = \inf A_{s} (l)$, $k^{+}_{s} (l) = \sup A_{s} (l)$, if the set $A_{s} (l)$ is not empty, and let $k^{-}_{s} (l) = k^{+}_{s} (l) = \inf \{ k : \alpha^{m_{l}} \leqslant a_{k} < \alpha^{m_{l}+1}\}$ otherwise.
{\sloppy

}

By the definition of $k^{\pm}_{s} (l)$ for any $l \geqslant 1$ and $s = 0,1,\ldots,L$ we have

\begin{equation*}\label{e305}
       a_{k^{\pm}_{s} (l)} \geqslant \alpha^{m_{l}}.
\end{equation*}

\noindent Hence, using assumption~\eqref{e202} and Lemma~\ref{Lem301}, by Chebyshev's inequality for any $s = 0,1,\ldots,L$ and $\lambda > 0$ we obtain

\begin{multline*}
\sum_{l=1}^{\infty} P \left( \left| \frac{S_{k^{\pm}_{s} (l)} - ES_{k^{\pm}_{s} (l)}}{a_{k^{\pm}_{s} (l)}} \right| > \lambda \right) \leqslant \frac{1}{\lambda^{p}} \sum_{l=1}^{\infty} \frac{E|S_{k^{\pm}_{s} (l)} - ES_{k^{\pm}_{s} (l)}|^{p}}{(a_{k^{\pm}_{s} (l)})^{p}} \leqslant \\ \leqslant C \lambda^{-p} \frac{1}{\psi (a_{k^{\pm}_{s} (l)})} \leqslant C \lambda^{-p} \frac{1}{\psi (\alpha^{m_{l}})} < \infty.
\end{multline*}

\noindent The application of Borel--Cantelli lemma yields to

\begin{equation}\label{e307}
       \frac{S_{k^{\pm}_{s} (l)} - ES_{k^{\pm}_{s} (l)}}{a_{k^{\pm}_{s} (l)}} \to 0 \qquad \mbox{a.s.} \qquad (l \to \infty)
\end{equation}
for any $s = 0,1,\ldots,L$.

Now for any natural number $n$ there exists $l=l(n)$ and $s=s(n)$, $\lim_{n \to \infty} l(n) = \infty$, $0 \leqslant s(n) \leqslant L$ such that

\begin{equation*}\label{e308}
       \alpha^{m_{l}} \leqslant a_{n} < \alpha^{m_{l}+1}, \qquad \frac{ES_{n}}{a_{n}} \in [s \varepsilon, (s+1)\varepsilon).
\end{equation*}

\noindent By the definition of $k^{\pm}_{s} (l)$ we have

\begin{equation*}\label{e309}
k^{-}_{s} (l) \leqslant n \leqslant k^{+}_{s} (l), \qquad \left| \frac{ES_{k^{\pm}_{s} (l)}}{a_{k^{\pm}_{s} (l)}} - \frac{ES_{n}}{a_{n}} \right| < \varepsilon,
\end{equation*}

\noindent and so

\begin{multline*}
-\varepsilon - \left( 1 - \frac{1}{\alpha} \right) A + \frac{1}{\alpha} \frac{1}{a_{k^{-}_{s} (l)}} (S_{k^{-}_{s} (l)} - ES_{k^{-}_{s} (l)}) \leqslant \\ \leqslant -\varepsilon - \left( 1 - \frac{1}{\alpha} \right) \frac{ES_{k^{-}_{s} (l)}}{a_{k^{-}_{s} (l)}} + \frac{1}{\alpha} \frac{1}{a_{k^{-}_{s} (l)}} (S_{k^{-}_{s} (l)} - ES_{k^{-}_{s} (l)}) = \\ = -\varepsilon - \frac{ES_{k^{-}_{s} (l)}}{a_{k^{-}_{s} (l)}} + \frac{1}{\alpha} \frac{S_{k^{-}_{s} (l)}}{a_{k^{-}_{s} (l)}} \leqslant -\varepsilon - \frac{ES_{k^{-}_{s} (l)}}{a_{k^{-}_{s} (l)}} + \frac{S_{k^{-}_{s} (l)}}{a_{n}} \leqslant \frac{S_{k^{-}_{s} (l)}}{a_{n}} - \frac{ES_{n}}{a_{n}} \leqslant \\ \leqslant \frac{S_{n} - ES_{n}}{a_{n}} \leqslant \frac{S_{k^{+}_{s} (l)}}{a_{n}} - \frac{ES_{n}}{a_{n}} \leqslant \frac{S_{k^{+}_{s} (l)}}{a_{n}} - \frac{ES_{k^{+}_{s} (l)}}{a_{k^{+}_{s} (l)}} + \varepsilon \leqslant \alpha \frac{S_{k^{+}_{s} (l)}}{a_{k^{+}_{s} (l)}} - \frac{ES_{k^{+}_{s} (l)}}{a_{k^{+}_{s} (l)}} + \varepsilon = \\ = \alpha \frac{(S_{k^{+}_{s} (l)} - ES_{k^{+}_{s} (l)})}{a_{k^{+}_{s} (l)}} + (\alpha - 1) \frac{ES_{k^{+}_{s} (l)}}{a_{k^{+}_{s} (l)}} + \varepsilon \leqslant \alpha \frac{(S_{k^{+}_{s} (l)} - ES_{k^{+}_{s} (l)})}{a_{k^{+}_{s} (l)}} + (\alpha - 1) A + \varepsilon.
\end{multline*}

\noindent Thus, using~\eqref{e307}, we obtain

\begin{equation}\label{e311}
-\varepsilon - \left( 1 - \frac{1}{\alpha} \right) A \leqslant \liminf_{n \to \infty} \frac{S_{n} - ES_{n}}{a_{n}} \leqslant \limsup_{n \to \infty} \frac{S_{n} - ES_{n}}{a_{n}} \leqslant (\alpha - 1) A + \varepsilon
\end{equation}

\noindent almost surely. Since~\eqref{e311} is true for any $\alpha > 1$ and $\varepsilon > 0$, we get relation~\eqref{e203}.
\end{proof}

\begin{proof}[Proof of Theorem~\ref{T204}]

Without loss of generality it can be assumed that $EX_{n}=0$ for all $n \geqslant 1$. By Chebyshev's inequality, using~\eqref{e211} and Lemma~\ref{Lem301}, for any $\varepsilon >0$, we get

\begin{equation*}\label{e312}
\sum_{n=1}^{\infty} P \left( \left| \frac{S_{2^{n}}}{a_{2^{n}}} \right| > \varepsilon  \right) \leqslant \frac{1}{\varepsilon^{2}} \sum_{n=1}^{\infty} \frac{ES^{2}_{2^{n}}}{a_{2^{n}}^{2}} \leqslant C \varepsilon^{-2} \sum_{n=1}^{\infty} \frac{1}{ \psi(2^{n})} < \infty.
\end{equation*}

\noindent The application of Borel--Cantelli lemma yields to

\begin{equation*}\label{e313}
      \frac{S_{2^{n}}}{a_{2^{n}}} \to 0 \qquad \mbox{a.s.}
\end{equation*}

To complete the proof it is sufficiently to show that

\begin{equation*}\label{e314}
\lim_{n \to \infty} \max_{2^{n}<k \leqslant 2^{n+1}} \left| \frac{S_{k}}{a_{k}} \right| =0 \qquad \mbox{a.s.}
\end{equation*}

\noindent We have

\begin{multline}\label{e315}
\max_{2^{n}<k \leqslant 2^{n+1}} \left| \frac{S_{k}}{a_{k}} \right| = \max_{2^{n}<k \leqslant 2^{n+1}} \left| \frac{S_{k}-S_{2^{n}}+S_{2^{n}}}{a_{k}} \right| \leqslant \\ \leqslant \left| \frac{S_{2^{n}}}{a_{2^{n}}} \right| + \max_{1 \leqslant k \leqslant 2^{n}} \left| \frac{\sum_{i=2^{n}+1}^{2^{n}+k} X_{i}}{a_{2^{n+1}}} \right| \frac{a_{2^{n+1}}}{a_{2^{n}}}
\end{multline}

\noindent The first summand in the right-hand side of~\eqref{e315} convergences to zero almost surely. Taking into account assumption~\eqref{e210}, it is sufficiently to prove that

\begin{equation}\label{e316}
 \lim_{n \to \infty} \max_{1 \leqslant k \leqslant 2^{n}} \left| \frac{\sum_{i=2^{n}+1}^{2^{n}+k} X_{i}}{a_{2^{n+1}}} \right| =0 \qquad \mbox{a.s.}
\end{equation}

By Kolmogorov's inequality (see~\cite{Kolm74}), for any $\varepsilon >0$, we have

\begin{equation*}\label{e317}
\sum_{n=1}^{\infty} P \left( \max_{1 \leqslant k \leqslant 2^{n}} \left| \frac{\sum_{i=2^{n}+1}^{2^{n}+k} X_{i}}{a_{2^{n+1}}} \right| > \varepsilon \right) \leqslant \frac{1}{\varepsilon^{2}} \sum_{n=1}^{\infty} \frac{\sum_{i=2^{n}+1}^{2^{n+1} } EX_{i}^{2}}{a_{2^{n+1}}^{2}} \leqslant C \varepsilon^{-2} \sum_{n=2}^{\infty} \frac{1}{\psi(2^{n})} < \infty.
\end{equation*}

\noindent Thus,~\eqref{e316} follows from Borel--Cantelli lemma.
\end{proof}

{\sloppy
\begin{proof}[Proof of Theorem~\ref{T205}]
Suppose that conditions of Theorem~\ref{T205} are satisfied for sequences $\{a_{n}\}_{n=1}^{\infty}$, $\{b_{n}\}_{n=1}^{\infty}$ of non-negative numbers, nevertheless the series $\sum_{n=1}^{\infty} b_{n}/a_{n}^{2}$ diverges. Then there is a sequence of independent random variables $\{X_{n}\}_{n=1}^{\infty}$ such that $EX_{n} = 0$, $Var(X_{n}) = b_{n}$ for all $n \geqslant 1$, but relation~\eqref{e203} does not hold (see,~for example,~\cite{Petr75}). The sequence $\{X_{n}\}_{n=1}^{\infty}$ satisfies the conditions of Theorem~\ref{T204}, so~\eqref{e203} has to hold. This contradiction concludes the proof.
\end{proof}

} 

The next example shows that assumption~\eqref{e210} in Theorem~\ref{T204} cannot be dropped.

\medskip

\noindent \textbf{Example 1.} Let $a_{n} = 2^{n/2}$, $n \geqslant 1$. We consider the sequence of independent random variables $\{X_{n}\}_{n=1}^{\infty}$ such that

\begin{equation*}\label{e318}
P(X_{n} = 1) = P(X_{n} = -1) = \frac{1}{2} \qquad \mbox{ for } n \; = \; 1 \mbox{ or } 2,
\end{equation*}

\noindent and

\begin{equation*}\label{e319}
P(X_{n} = 2^{n/2}) = P(X_{n} = -2^{n/2}) = \frac{n-2}{4 n (n-1)},
\end{equation*}

\begin{equation*}\label{e320}
P(X_{n} = 0) = 1 - \frac{n-2}{2 n (n-1)}
\end{equation*}

\noindent for all $n \geqslant 3$. Then $EX_{n}=0$ for all $n \geqslant 1$, $Var(X_{1}) = Var(X_{2}) = 1$ and

\begin{equation*}\label{e321}
Var(X_{n}) = \frac{2^{n}}{n} - \frac{2^{n-1}}{n-1} \qquad \mbox{for all } n \geqslant 3.
\end{equation*}

\noindent We have

\begin{equation*}\label{e322}
Var(S_{n}) = \sum_{k=1}^{n} Var(X_{k}) = \frac{2^{n}}{n} \qquad \mbox{for all } n \geqslant 3.
\end{equation*}

\noindent Thus, the sequence of random variables $\{X_{n}\}_{n=1}^{\infty}$ satisfies condition~\eqref{e211} with ${a_{n} = 2^{n/2}}$, $n \geqslant 1$ and function $\psi (x) = x$, $x > 0$ (belonging to~$\Psi_{c}$). Moreover
{\sloppy

}

\begin{equation*}\label{e323}
\sum_{n=3}^{\infty} P(|X_{n}| = a_{n}) = \sum_{n=3}^{\infty} \frac{n-2}{2 n (n-1)} = \infty.
\end{equation*}

\noindent Application of Borel--Cantelli lemma yields to

\begin{equation}\label{e324}
 P(|X_{n}| = a_{n} \; \; \mbox{i.o.}) = 1.
\end{equation}

\noindent We shall suppose that relation~\eqref{e203} holds. Then we have

\begin{equation*}\label{e325}
 \frac{X_{n}}{a_{n}} = \frac{S_{n}}{a_{n}} - \frac{a_{n-1}}{a_{n}} \cdot \frac{S_{n-1}}{a_{n-1}} \to 0 \qquad \mbox{a.s.,}
\end{equation*}

\noindent which contradicts~\eqref{e324}.

\bigskip

\end{document}